\theoremstyle{plain}
\newtheorem{theorem}{Theorem}[section]
\theoremstyle{plain}
\theoremstyle{plain}
\newtheorem{remark}[theorem]{Remark}
\numberwithin{equation}{section}
\newcommand{\softd}{{\leavevmode\setbox1=\hbox{d}%
          \hbox to 1.05\wd1{d\kern-0.4ex{\char039}\hss}}}
\newcommand{\abs}[1]{\lvert#1\rvert}
\newcommand{\norm}[1]{\lVert#1\rVert}
\newcommand{\D}{\partial}
\newcommand{\dt}{\Delta t}
\newcommand{\mbb}{\mathbb}
\newcommand{\mcal}{\mathcal}
\newcommand{\veps}{\varepsilon}
\newcommand{\Fr}{\mathrm{Fr}}
\newcommand{\Ma}{\mathrm{Ma}}
\def\mm#1
\newcommand{\ba}{\bar{a}}  
\newcommand{\br}{\bar{\rho}}  
\newcommand{\bu}{\bar{u}}  
\newcommand{\Dlt}{\Delta t}
\newcommand{\gm}{\gamma}
\newcommand{\ld}{\lambda}
\title[AP Schemes for singular limits of Euler equations ]
{An Asymptotic Preserving Time Integrator for Low Mach Number Limits of
  the Euler Equations with Gravity}
\author[Arun and Samantaray]{K. R. Arun and S. Samantaray$^*$}
\subjclass{Primary: 35L45, 35L65, 35L67; Secondary: 65M06, 65M08,
  65M20.} 
\keywords{Asymptotic preserving, Low Mach number limit, Boussinesq
  limit, IMEX-RK scheme, $L^2$-stability}
 \email{arun@iisertvm.ac.in}
 \email{sauravsam13@iisertvm.ac.in}
\thanks{$^*$ Corresponding author: S. Samantaray}
\begin{document}
\maketitle


 \centerline{ School of Mathematics}
 \centerline{Indian Institute of Science Education and Research Thiruvananthapuram}
 \centerline{Thiruvananthapuram - 695551, India}

\bigskip

 \centerline{(Communicated by the associate editor name)}

\begin{abstract}
  We consider two distinguished asymptotic limits of the Euler
  equations in a gravitational field, namely the incompressible and
  Boussinesq limits. Both these limits can be obtained as singular
  limits of the Euler equations under appropriate scaling of the Mach
  and Froude numbers. We propose and analyse an asymptotic preserving
  (AP) time discretisation for the numerical approximation of the
  Euler system in these asymptotic regimes. A key step in the
  construction of the AP scheme is a semi-implicit discretisation of the
  fluxes and the source term. The non-stiff convective terms are treated
  explicitly whereas the stiff pressure-gradient and source term are
  implicit. The implicit terms are combined to get a nonlinear
  elliptic equation. We show that the overall scheme is consistent
  with the respective limit system when the Mach number goes to
  zero. A linearised stability analysis confirms the $L^2$-stability
  of the proposed scheme. The results of numerical experiments validate
  the theoretical findings.    
\end{abstract}

\section{Introduction}
The presence of sound/acoustic waves poses a major challenge in
atmospheric and meteorological flow computations due to their fast
characteristic time scales. Hence, in most of the practical
computations, one relies on the so-called `sound-proof' models in
which the sound waves are eliminated. The incompressible equations,
Boussinesq equations, pseudo-incompressible equations, anelastic
equations etc.\ are sound-proof models frequently used in the
literature, to name but a few. The derivation and analysis of
sound-proof models, study of their regimes of validity etc.\ are
topics of active research even today; see, e.g., \cite{Durran} and the
references cited therein for more details. 

A powerful and systematic method to derive a sound-proof model is an 
asymptotic analysis of the Euler equations in which one or more of
the non-dimensional quantities, such as the Mach, Froude or Rossby
numbers, assume the role of limiting parameters \cite{Klein}. However,
from a mathematical point of view, a sound-proof model is often
recognised as a singular limit of the Euler equations under appropriate
scalings. In addition, sound-proof equation systems are typically of
hyperbolic-elliptic in nature, as opposed to the purely hyperbolic
compressible Euler equations. On the other hand, from a numerical
point of view, approximation of singular limits poses several
challenges: stiffness arising from stringent stability requirements,
reduction of order of accuracy due to the presence of limiting
parameters and so on.  

The goal of the present work is to obtain the incompressible and
Boussinesq equations as two distinguished singular limits of the Euler
equations in a gravitational field under appropriate scalings of the
Mach and Froude numbers. We present their numerical resolution via the
so-called asymptotic preserving (AP) methodology. An AP discretisation
for a singularly perturbed problem in general is a one which reduces
to a consistent discretisation of the limit model when the limits of  
perturbation parameters are taken. In addition, the stability
requirements of the discretisation should remain independent of the
perturbation parameters; see \cite{Jin}. A key step in the
construction of our AP scheme is a semi-implicit time discretisation
based on a splitting of the flux and source terms into stiff and non-stiff
terms. We show the asymptotic consistency of the scheme with the
incompressible and Boussinesq limits as the Mach number approaches
zero. As a first step towards the stability of the scheme in the
asymptotic regime, we perform an $L^2$-stability analysis of the
proposed scheme on a linearised model, namely the wave equation
system. The results of our numerical experiments presented here clearly
validate the AP nature of the proposed scheme.

\section{Isentropic Euler System with Gravity and Its Asymptotic 
  Limits} 
We consider the scaled, isentropic compressible Euler equations with
gravity: 
\begin{align}
  \D_t \rho + \nabla \cdot ( \rho u) &= 0, \label{eq:euler_mass} \\
  \D_t (\rho u ) + \nabla \cdot (\rho u \otimes u ) + \frac{\nabla p}
  {\Ma^2} & = -\frac{\rho e_3}{\Fr^2},  \label{eq:euler_mom}
\end{align}
where $\rho>0$ is the density and $u \in \mbb{R}^3$ is the velocity
vector. Here, $\nabla$, $\nabla \cdot$ and $\otimes$ are respectively
the gradient, divergence and tensor product operators and $e_3$ is the
unit vector in the $x_3$-direction. We assume a simplified equation of
state of an isentropic process, therein the pressure is related to
density via $p=P(\rho) = \rho^{\gamma}$, where $\gamma$ is a 
constant. In \eqref{eq:euler_mass}-\eqref{eq:euler_mom}, the
non-dimensional parameters $\Ma$ and $\Fr$ are respectively, the
reference Mach and Froude numbers.

The goal of the present work is the numerical approximation of some
distinguished asymptotic limits of the Euler system
\eqref{eq:euler_mass}-\eqref{eq:euler_mom} which models slow
convection in a highly stratified medium; see, e.g.\ \cite{Durran,
  Klein} for more details. In order to describe these asymptotic
regimes, in the following, we consider two important scalings of $\Ma$
and $\Fr$ in terms of an infinitesimal parameter $\veps$.  
\begin{itemize}
\item $\Ma=\veps$ and $\Fr=1$. In this case, the pressure gradient
  term dominates the gravity term and we obtain the low Mach number
  limit. 
\item $\Ma=\veps$ and $\Fr=\sqrt{\veps}$. In this case, the
  gravitational term is also significant we derive the Boussinesq
  limit. 
\end{itemize}

As a first step towards the derivation of the low Mach and Boussinesq
limits, we expand all the dependent variables using the following
three-term ansatz:   
\begin{align}
  f(t,x) = f_{(0)}(t, x) + \veps f_{(1)} (t, x) + \veps^2 f_{(2)}(t,
  x). \label{eq:ansatz}
\end{align}
We do not intent to provide the details of the derivation, but refer
the interested reader to \cite{Klein} for more details.  

\subsection{Zero Mach Number Limit} 
\label{eq:zero_mach_lim}

We set $\Ma=\veps$ and $\Fr = 1$ in
\eqref{eq:euler_mass}-\eqref{eq:euler_mom} and let $\veps\to0$ to
obtain the zero Mach number limit model:
\begin{align}
  \D_t u_{(0)}+ \nabla \cdot \left( u_{(0)}
  \otimes u_{(0)} \right) + \nabla p_{(2)} &= - e_3,  \label{eq:LM_mom}\\
  \nabla \cdot u_{(0)} &= 0 \label{eq:LM_div}.
\end{align}
The above system \eqref{eq:LM_mom}-\eqref{eq:LM_div} is the standard
incompressible Euler system for the unknowns $u_{(0)}$ and $p_{(2)}$.
\begin{remark}
  Throughout our analysis and the numerical experiments presented in
  this paper, we assume either periodic or wall boundary conditions. As
  a consequence, the leading order density $\rho_{(0)}$ is a constant
  and the leading order velocity $u_{(0)}$ is divergence-free. Therefore,
  both the zero Mach and Boussinesq limits fall in the category of
  `sound-proof' models.  
\end{remark}

\subsection{Boussinesq Limit}
\label{eq:bousinesq_lim}
Now we set $\Ma = \veps$ and $\Fr = \sqrt{\veps}$ in
\eqref{eq:euler_mass}-\eqref{eq:euler_mom}. Letting $\veps \to 0$
yields the Boussinesq model:
\begin{align}
  \D_t u_{(0)} + \nabla \cdot \left( u_{(0)} \otimes u_{(0)} \right) + \nabla
  p_{(2)}  &= - \rho_{(1)} e_3, \label{eq:BO_mom}\\
  \nabla \cdot u_{(0)} & = 0 \label{eq:BO_div}. 
\end{align}
Since the first order density $\rho_{(1)}$ appears in
\eqref{eq:BO_mom}-\eqref{eq:BO_div}, we need a closure relation. Using
the multiscale ansatz \eqref{eq:ansatz} in the equation of state $p =
\rho^{\gamma}$ and using the hydrostatic balance $\nabla
p_{(1)}=-\rho_{(0)}e_3$ gives
\begin{equation}
  \rho_{(1)} = 1-\frac{x_3}{\gamma}.
\end{equation}

\begin{remark}
  It has be noted that both zero Mach and the Boussinesq limit systems
  are hyperbolic-elliptic in nature. 
\end{remark}

\section{Semi-implicit Time Discretisation}
In this section we present the time discretisation of the Euler system
\eqref{eq:euler_mass}-\eqref{eq:euler_mom} based on implicit-explicit
(IMEX) Runge Kutta (RK) schemes. These schemes were originally
designed for stiff ordinary differential equations; see .e.g.\
\cite{Pareschi-Russo} and the references therein.

Let $0 = t^0 < t^1 < \cdots < t^n < t^{n+1} < \cdots$ be an increasing
sequence of times and let $\dt$ be the uniform time-step. Let us
denote by $f^{n} (x)$, the approximation to the value of any function
$f$ at time $t^n$, i.e.\ $f^{n}(x) \sim f(t^n, x)$. 

A first order accurate semi-discrete scheme for the Euler equations
\eqref{eq:euler_mass}-\eqref{eq:euler_mom} is defined as 
\begin{align}
  \frac{\rho^{n+1} - \rho^{n}}{\dt} + \nabla \cdot q^{n+1} &=
                                                                  0, \label{eq:euler_mas_TD}
  \\ 
  \frac{q^{n+1} - q^{n}}{\dt} + \nabla \cdot \left(\frac{ q
  \otimes
  q
  }{\rho}\right)^{n}
  + \frac{\nabla p(\rho^{n+1})}{\veps^2} &= -\frac{\rho^{n+1}}{\veps^{\alpha}}
                                     e_3 \label{eq:euler_mom_TD}. 
\end{align}
Here, $q=\rho u$ denotes the momentum and $\alpha\in \{0,1\}$ is a
parameter so that $\alpha=0$ corresponds to the low Mach limit and
$\alpha=1$ corresponds to the Boussinesq limit. Though the scheme
\eqref{eq:euler_mas_TD}-\eqref{eq:euler_mom_TD} consists of a fully
implicit step \eqref{eq:euler_mas_TD} and a semi-implicit step
\eqref{eq:euler_mom_TD}, its numerical resolution is fairly
simple. Eliminating $q^{n+1}$ between \eqref{eq:euler_mas_TD} and
\eqref{eq:euler_mom_TD} yields the nonlinear elliptic equation:
\begin{align}
  -\frac{\dt^2}{\veps^2}\Delta P(\rho^{n+1}) -
  \frac{\dt^2}{\veps^{\alpha}}\nabla \cdot \left(\rho^{n+1}e_3\right)
  +  \rho^{n+1}= \rho^n - \Phi(\rho^n, u^{n}) \label{eq:elliptic_problem}, 
\end{align}
where the known expression $\Phi$ is given by 
\begin{equation}
  \Phi(\rho^n,u^n) := \dt \nabla \cdot q^n + \dt^2
  \nabla^2:\left(\frac{q \otimes q}{\rho}\right)^n 
\end{equation}
with $:$ denoting the contracted product. Solving the elliptic
equation \eqref{eq:elliptic_problem} yields the updated density
$\rho^{n+1}$. The velocity $u^{n+1}$ can then be updated using
\eqref{eq:euler_mom_TD}, which is now an explicit evaluation. Hence,
the scheme \eqref{eq:euler_mas_TD}-\eqref{eq:euler_mom_TD} consists of 
solving the elliptic equation \eqref{eq:elliptic_problem}, followed by
an explicit evaluation of \eqref{eq:euler_mom_TD}. 

\section{Asymptotic Preserving Property}
A numerical scheme for a singular perturbation problem, such as the
Euler system \eqref{eq:euler_mass}-\eqref{eq:euler_mom}, may
not resolve the existing multiple scales in space and time. In
addition, when the perturbation parameter goes to zero, the scheme may
approximate a completely different set of equations than the actual
limiting systems. An asymptotic preserving (AP) scheme is the one
which is consistent with the limiting set of equations in the singular
limit; see \cite{Jin} for a review of AP schemes. 
\begin{theorem}
  \label{eq:ap_property}
  The time semi-discrete scheme
  \eqref{eq:euler_mas_TD}-\eqref{eq:euler_mom_TD} for $\alpha = 0$ is
  asymptotically consistent with the low Mach number model as
  $\veps\to0$. 
\end{theorem}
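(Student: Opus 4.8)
The plan is to insert the multiscale ansatz \eqref{eq:ansatz}, written at the discrete time levels (so $\rho^{n}=\rho_{(0)}^{n}+\veps\rho_{(1)}^{n}+\veps^2\rho_{(2)}^{n}$, and likewise for $q^{n}=\rho^{n}u^{n}$), into the two update relations \eqref{eq:euler_mas_TD}--\eqref{eq:euler_mom_TD} with $\alpha=0$, Taylor-expand $p(\rho^{n+1})=(\rho^{n+1})^{\gamma}$ in powers of $\veps$, and equate the coefficients of the distinct powers $\veps^{-2}$, $\veps^{-1}$, $\veps^{0}$. Throughout, one assumes the data at time level $n$ is already well prepared, i.e.\ $\rho_{(0)}^{n}$ is a positive spatial constant, $\rho_{(1)}^{n}$ is a spatial constant and $\nabla\cdot u_{(0)}^{n}=0$; the claim is then proved by showing that this same structure propagates to level $n+1$, so that an induction on $n$ closes the argument once the initial data is taken in this form.

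For the momentum update \eqref{eq:euler_mom_TD}, the $O(\veps^{-2})$ balance reads $\nabla p_{(0)}(\rho^{n+1})=0$, and since $P'(\rho)=\gamma\rho^{\gamma-1}>0$ this forces $\rho_{(0)}^{n+1}$ to be constant in space (here the periodic or solid-wall boundary conditions enter, exactly as in the continuous derivation). The $O(\veps^{-1})$ balance then gives $\nabla p_{(1)}^{n+1}=\gamma(\rho_{(0)}^{n+1})^{\gamma-1}\nabla\rho_{(1)}^{n+1}=0$, so $\rho_{(1)}^{n+1}$ is also a spatial constant. Next, integrating the $O(1)$ part of the mass update \eqref{eq:euler_mas_TD} over the bounded domain and using the divergence theorem annihilates $\int\nabla\cdot q_{(0)}^{n+1}$, whence $\int(\rho_{(0)}^{n+1}-\rho_{(0)}^{n})=0$; since both are spatial constants, $\rho_{(0)}^{n+1}=\rho_{(0)}^{n}=:\bar\rho$ is a fixed constant. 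Substituting this back, the $O(1)$ mass balance collapses to $\nabla\cdot q_{(0)}^{n+1}=0$, i.e.\ $\nabla\cdot u_{(0)}^{n+1}=0$, the discrete analogue of \eqref{eq:LM_div}. Finally, the $O(1)$ part of \eqref{eq:euler_mom_TD}, divided by $\bar\rho$ and written with $q_{(0)}=\bar\rho\,u_{(0)}$ and $\pi^{n+1}:=\gamma\bar\rho^{\gamma-2}\rho_{(2)}^{n+1}$ for the relabelled second-order pressure, becomes
\begin{equation*}
  \frac{u_{(0)}^{n+1}-u_{(0)}^{n}}{\dt} + \nabla\cdot\bigl(u_{(0)}\otimes u_{(0)}\bigr)^{n} + \nabla\pi^{n+1} = -e_3,
\end{equation*}
which together with the divergence constraint is precisely a first-order-in-time consistent discretisation of the incompressible Euler system \eqref{eq:LM_mom}--\eqref{eq:LM_div}. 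This establishes the asymptotic consistency.

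The routine part is the order-by-order algebra, including the Taylor expansion of the equation of state and the verification that the quadratic cross-terms in $\rho_{(1)}$ appearing in $p_{(2)}$ are themselves spatial constants (by the previous step) and hence do not pollute the $O(1)$ momentum balance. The main obstacle — and the point that deserves the most care — is the bookkeeping that turns the formal order-by-order identities into a genuine statement about the scheme: one must argue that the discrete solution $(\rho^{n+1},q^{n+1})$ obtained by solving the elliptic problem \eqref{eq:elliptic_problem} admits an asymptotic expansion of the assumed form with coefficients bounded uniformly in $\veps$, and that the well-prepared structure is truly inherited at each step (in particular, that the elliptic solve creates no $O(1)$ spatial variation in $\rho_{(0)}$ or $\rho_{(1)}$). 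This is where the solvability of \eqref{eq:elliptic_problem} is invoked together with the induction hypothesis; once that is in place, the limit passage $\veps\to0$ is immediate.
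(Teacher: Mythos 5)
Your proposal is correct and follows essentially the same route as the paper's proof: insert the ansatz at the discrete time levels, balance powers of $\veps$ to get spatial constancy of the leading-order density, integrate the mass update and use the divergence theorem under the periodic/wall boundary conditions to conclude $\rho^{n+1}_{(0)}=\rho^{n}_{(0)}$ and $\nabla\cdot u^{n+1}_{(0)}=0$, and then read off the $\mcal{O}(1)$ momentum balance as a consistent discretisation of \eqref{eq:LM_mom}--\eqref{eq:LM_div}. Your additions (the explicit well-preparedness induction, the $\mcal{O}(\veps^{-1})$ balance for $\rho_{(1)}$, and the remark about uniform boundedness of the expansion coefficients) are careful refinements of details the paper leaves implicit, not a different argument.
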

\begin{proof}
  First, we apply the same ansatz \eqref{eq:ansatz} for all the dependent
  variables at times $t^{n}$ and $t^{n+1}$ in the semi-discrete scheme
  \eqref{eq:euler_mas_TD}-\eqref{eq:euler_mom_TD} and balance the
  like-powers of $\veps$. The lowest order terms gives $\nabla
  P(\rho^{n+1}_{(0)}) = 0$ and the equation of state $P(\rho) =
  \rho^{\gamma}$ then yields that $\rho^{n+1}_{(0)}$ is
  constant. Therefore, from the mass update \eqref{eq:euler_mas_TD} we
  get 
  \begin{align}
    -\nabla \cdot u^{n+1}_{(0)} = \frac{\rho^{n+1}_{(0)} -
    \rho^n_{(0)}}{\rho^{n+1}_{(0)} \dt}. \label{eq:euler_mass_TD_mod}
  \end{align}

  We integrate the above equation \eqref{eq:euler_mass_TD_mod} over a
  domain $\Omega$ and use Gauss' divergence theorem to obtain:
  \begin{equation}
    -\frac{1}{\abs{\Omega}}\int_{\D\Omega}u^{n+1}_{(0)}\cdot\nu
    d\sigma = \frac{\rho^{n+1}_{(0)} -
      \rho^{n}_{(0)} }{\rho^{n+1}_{(0)}
      \dt}. \label{eq:vel_int}
  \end{equation}
  Hence, the leading order density $\rho_{(0)}$ rises or falls only due to
  compressions or expansions at the boundary. The temporal variations in
  $\rho_{(0)}$ can produce nonzero divergences in the leading order
  velocity $u_{(0)}$. It can be proved that the integral on the left
  hand side of \eqref{eq:vel_int} vanishes under most of the physically
  relevant boundary conditions. In this case, we obtain $\rho^{n+1}_{(0)}
  = \rho^{n}_{(0)}$ and this in turn enforces the divergence constraint
  at $t^{n+1}$ as
  \begin{equation}
    \nabla \cdot u^{n+1}_{(0)} = 0.
    \label{eq:div-u0}
  \end{equation}
  Combining \eqref{eq:div-u0} and the $\mcal{O}(1)$ terms in
  \eqref{eq:euler_mom_TD}, we have the following limiting system:
  \begin{align}
    \frac{u^{n+1}_{(0)} - u^{n}_{(0)}}{\dt} + \nabla \cdot ( u^{n}_{(0)}
    \otimes u^{n}_{(0)}) + p^{n+1}_{(2)} &=
                                           -e_3, \label{eq:euler_mom_TD_limit} \\
    \nabla \cdot u^{n+1}_{(0)} &= 0 \label{eq:div_cond}. 
  \end{align}
  The above system \eqref{eq:euler_mom_TD_limit}-\eqref{eq:div_cond} is
  clearly a consistent discretisation of the low Mach number limit system
  \eqref{eq:LM_mom}-\eqref{eq:LM_div}. 
\end{proof}

\begin{theorem}
  The time semi-discrete scheme \eqref{eq:euler_mas_TD}-\eqref{eq:euler_mom_TD}
  for $\alpha = 1$ is asymptotically consistent with the Boussinesq
  model.
\end{theorem}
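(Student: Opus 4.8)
The plan is to mimic the proof of Theorem~\ref{eq:ap_property}, inserting the ansatz \eqref{eq:ansatz} for every dependent variable at both $t^n$ and $t^{n+1}$ into the semi-discrete scheme \eqref{eq:euler_mas_TD}-\eqref{eq:euler_mom_TD} with $\alpha=1$, and then collecting like powers of $\veps$. The essential difference from the $\alpha=0$ case is that the source term now carries a factor $\veps^{-1}$, so the expansion of the momentum equation produces a nontrivial balance at order $\veps^{-1}$ in addition to the orders $\veps^{-2}$ and $\veps^{0}$; this order-$\veps^{-1}$ relation is precisely what will deliver the closure $\rho_{(1)}=1-x_3/\gamma$ needed to identify the limit with the Boussinesq system.

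First I would treat the leading orders exactly as before. The $\mcal{O}(\veps^{-2})$ part of \eqref{eq:euler_mom_TD} gives $\nabla P(\rho^{n+1}_{(0)})=0$, hence $\rho^{n+1}_{(0)}$ is a spatial constant by the equation of state. Substituting this into the $\mcal{O}(1)$ part of the mass update \eqref{eq:euler_mas_TD} and writing $q_{(0)}=\rho_{(0)}u_{(0)}$ yields, as in \eqref{eq:euler_mass_TD_mod}, that $-\nabla\cdot u^{n+1}_{(0)}=(\rho^{n+1}_{(0)}-\rho^{n}_{(0)})/(\rho^{n+1}_{(0)}\dt)$; integrating over $\Omega$ and invoking the periodic or wall boundary conditions makes the boundary flux vanish exactly as in \eqref{eq:vel_int}, so $\rho^{n+1}_{(0)}=\rho^{n}_{(0)}$ and therefore $\nabla\cdot u^{n+1}_{(0)}=0$.

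Next I would extract the order-$\veps^{-1}$ balance of \eqref{eq:euler_mom_TD}. Expanding $P(\rho)=\rho^{\gamma}$ about $\rho_{(0)}$ shows that the $\veps^{-1}$ contribution of $\veps^{-2}\nabla P(\rho^{n+1})$ is $\nabla p^{n+1}_{(1)}$ with $p_{(1)}=\gamma\rho_{(0)}^{\gamma-1}\rho_{(1)}$, while the $\veps^{-1}$ contribution of the source $-\veps^{-1}\rho^{n+1}e_3$ is $-\rho^{n+1}_{(0)}e_3$; the convective and time-difference terms contribute nothing at this order. Hence the scheme reproduces the discrete hydrostatic balance $\nabla p^{n+1}_{(1)}=-\rho^{n+1}_{(0)}e_3$, which, together with $\rho^{n+1}_{(0)}$ constant, the normalisation $\rho_{(0)}=1$, and the order-$\veps$ mass balance combined once more with the boundary conditions to eliminate the spatially constant integration term, gives $\rho^{n+1}_{(1)}=1-x_3/\gamma$, exactly the closure behind \eqref{eq:BO_mom}-\eqref{eq:BO_div}. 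Finally, collecting the $\mcal{O}(1)$ terms of \eqref{eq:euler_mom_TD} and using $\rho_{(0)}=1$ gives
\begin{align*}
  \frac{u^{n+1}_{(0)}-u^{n}_{(0)}}{\dt}+\nabla\cdot\left(u^{n}_{(0)}\otimes u^{n}_{(0)}\right)+\nabla p^{n+1}_{(2)} &= -\rho^{n+1}_{(1)}e_3,
\end{align*}
which, paired with $\nabla\cdot u^{n+1}_{(0)}=0$, is a consistent first-order discretisation of the Boussinesq model \eqref{eq:BO_mom}-\eqref{eq:BO_div}.

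The main obstacle I anticipate is the bookkeeping of the buoyancy term: one must check that its singular $\veps^{-1}$ part is cancelled exactly by the $\veps^{-1}$ part of the pressure gradient, so that no $\mcal{O}(\veps^{-1})$ residual pollutes the limiting momentum equation, and that the leftover $\mcal{O}(1)$ part is precisely $-\rho^{n+1}_{(1)}e_3$ with $\rho_{(1)}$ the static profile. This forces a careful argument, parallel to \eqref{eq:vel_int} but at order $\veps$ in the continuity equation, to show that $\rho_{(1)}$ stays time-independent and equal to $1-x_3/\gamma$; the remainder is routine power-counting as in Theorem~\ref{eq:ap_property}.
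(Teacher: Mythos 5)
Your proposal is correct and follows exactly the route the paper intends: the paper omits this proof as ``similar to Theorem~\ref{eq:ap_property}'', and your argument is precisely that proof transplanted to $\alpha=1$, with the extra $\mcal{O}(\veps^{-1})$ balance reproducing the discrete hydrostatic relation $\nabla p^{n+1}_{(1)}=-\rho^{n+1}_{(0)}e_3$ and hence the closure $\rho_{(1)}=1-x_3/\gamma$ used in the paper's own derivation of the Boussinesq limit. The only soft spot, fixing the integration constant in $\rho_{(1)}$ via well-prepared data and the $\mcal{O}(\veps)$ mass balance, is treated no more rigorously in the paper itself, so your write-up is a faithful (indeed more detailed) version of the intended argument.
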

\begin{proof}
  The proof is similar to that of Theorem~\ref{eq:ap_property} and
  hence omitted. 
\end{proof}

\section{$L^2$ Stability Analysis of the Semi-discrete Scheme}
\label{sec:L2_stability_semi_disc}
The aim of this section is to present the results of an
$L^2$-stability analysis of the semi-discrete scheme
\eqref{eq:euler_mas_TD}-\eqref{eq:euler_mom_TD}. To this end, we
consider the homogeneous linear wave equation system:
\begin{align}
  \D_t\rho+(\bar{u}\cdot\nabla)\rho+\bar{\rho}\nabla\cdot u &=
                                                            0, \label{eq:wave_mass} \\
  \D_t
  u+(\bar{u}\cdot\nabla)u+\frac{\ba^2}{\br\veps^2}\nabla\rho&=0 \label{eq:wave_mom}   
\end{align}
as a simplified model of the Euler system
\eqref{eq:euler_mass}-\eqref{eq:euler_mom}. Here,
$(\bar{\rho},\bar{u})$ is a linearisation state and $\ba$ is a
linearisation state for the sound velocity. Applying the AP methodology
introduced in \eqref{eq:euler_mas_TD}-\eqref{eq:euler_mom_TD} to
\eqref{eq:wave_mass}-\eqref{eq:wave_mom} yields the semi-discrete
scheme:
\begin{align}
  \frac{\rho^{n+1}-\rho^n}{\dt}+(\bar{u}\cdot\nabla)\rho^n+\bar{\rho}\nabla\cdot
  u^{n+1} &=0, \label{eq:wave_mas_TD} \\
  \frac{u^{n+1}-u^n}{\dt}+(\bar{u}\cdot\nabla)u^n+\frac{\ba^2}{\br\veps^2}\nabla\rho^{n+1}&=0. \label{eq:wave_mom_TD} 
\end{align}
In the following, we use a stability result due to Richtmyer; see e.g.\
\cite{Richtmyer,Buchanan} for details. Note that any difference scheme
of the form $B_1 U^{n+1} = B_2 U^n$, where $B_1, B_2$ are $p \times p$
matrices, independent of $t$ and $x$, and $U^n \in \mbb{R}^p$ is the
approximation to the original solution at time $t^n$, can be reduced
to $\hat{U}^{n+1} = G(\Dlt,\xi) \hat{U}^n$ in the Fourier variable
$\xi$. Here, $G(\Dlt,\xi)$ is the Fourier transform of the matrix
$(B_1)^{-1}B_2$ and is called the amplification matrix. The stability
result due to Richtmyer states that 
\begin{theorem}
  \label{thm:Richtmyer}
  A difference scheme given by $B_1 U^{n+1} = B_2 U^n$ is stable if 
  \begin{enumerate}[(i)]
  \item the elements of $G(0,\xi)$ are bounded for all $\xi \in
    \mbb{L}$, where $\mbb{L}$ is a lattice where $\xi$ varies, 
  \item $\norm{G(0,\xi)} \leq 1$ and
  \item $G(\Dlt,\xi)$ is Lipschitz continuous at $\Dlt = 0$ in the sense that
    \begin{equation*}
      G(\Dlt,\xi) = G(0,\xi) + \mcal{O}(\Dlt) \text{ as } \Dlt \to 0.
    \end{equation*}
  \end{enumerate}
\end{theorem}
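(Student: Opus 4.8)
The plan is to reduce $L^{2}$-stability to a uniform bound on the powers of the amplification matrix, and then to obtain that bound by a perturbation argument around $G(0,\xi)$. First I would recall that, by Plancherel's theorem, a difference scheme of the form $B_{1}U^{n+1}=B_{2}U^{n}$ is $L^{2}$-stable on $[0,T]$ precisely when there is a constant $K$, independent of $\Dlt$ and of $n$, with $\norm{G(\Dlt,\xi)^{n}}\le K$ for every $\xi\in\mbb{L}$ and every $n$ satisfying $n\Dlt\le T$. So it suffices to produce such a $K$ from hypotheses (i)--(iii).

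Next, using (iii), I would write $G(\Dlt,\xi)=G(0,\xi)+\Dlt\,E(\Dlt,\xi)$, where conditions (i) and (ii) are exactly what is needed to guarantee a bound $\norm{E(\Dlt,\xi)}\le C$ with $C$ independent of $\xi$ and of all sufficiently small $\Dlt$. Abbreviating $A:=G(0,\xi)$ and $B:=G(\Dlt,\xi)$, the telescoping identity
\begin{equation*}
  B^{n}-A^{n}=\sum_{k=0}^{n-1}A^{k}(B-A)\,B^{n-1-k},
\end{equation*}
combined with $\norm{A^{k}}\le\norm{A}^{k}\le 1$ from (ii), gives $\beta_{n}\le 1+C\Dlt\sum_{j=0}^{n-1}\beta_{j}$ for $\beta_{n}:=\norm{B^{n}}$ and $\beta_{0}=1$. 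A one-line induction (discrete Gr\"onwall inequality) then yields $\beta_{n}\le(1+C\Dlt)^{n}\le e^{C\,n\Dlt}\le e^{CT}$, which is the claimed uniform bound with $K=e^{CT}$.

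The step I expect to be the main obstacle is not any individual estimate but making every constant genuinely uniform in the Fourier variable $\xi$: the perturbation bound $\norm{E(\Dlt,\xi)}\le C$ and the power bound $\norm{A^{n}}\le 1$ must hold with one and the same $C$ for all $\xi$ in the lattice, and this is precisely the role of hypotheses (i) and (ii). Care is needed here because $G(0,\xi)$ need not be a normal matrix, so a bound on its spectral radius alone would not suffice; it is the genuine operator-norm bound in (ii) that makes the telescoping argument go through. Once this uniformity is in hand, the remaining computation is routine, and the $L^{2}$-stability of the concrete scheme \eqref{eq:wave_mas_TD}--\eqref{eq:wave_mom_TD} will follow by verifying (i)--(iii) directly for its explicit, constant-coefficient amplification matrix.
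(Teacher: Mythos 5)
The paper offers no proof of this statement at all: it is quoted as a classical criterion and simply referred to the literature (\cite{Richtmyer,Buchanan}), so there is no in-paper argument to compare yours against. Your reconstruction is the standard Lax--Richtmyer/von Neumann argument and is correct in substance: Parseval reduces $L^2$-stability to a uniform bound on $\norm{G(\Dlt,\xi)^n}$ over $n\Dlt\le T$ and $\xi\in\mbb{L}$, and the perturbation of $G(0,\xi)$ yields the bound $e^{CT}$. Two remarks. First, the telescoping identity is more machinery than you need: since (ii) is a genuine operator-norm bound and not merely a spectral-radius (von Neumann) condition, submultiplicativity gives directly $\norm{G(\Dlt,\xi)^n}\le\norm{G(\Dlt,\xi)}^n\le(1+C\Dlt)^n\le e^{CT}$; your caution about non-normal $G(0,\xi)$ would only matter if (ii) were a spectral-radius condition, and in that case the telescoping argument would not rescue you either without a Kreiss-type hypothesis. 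Second, your attribution of the uniformity of the constant $C$ to hypotheses (i)--(ii) is slightly off: those conditions constrain only $G(0,\xi)$, not the increment $G(\Dlt,\xi)-G(0,\xi)$, so the uniformity in $\xi$ of the $\mcal{O}(\Dlt)$ term must be read into hypothesis (iii) itself (uniformly over $\xi\in\mbb{L}$) or deduced from boundedness of the lattice together with explicit formulas, which is precisely how the criterion is applied later: in the proof of Theorem~\ref{thm:L2_stability_1stOrd_semi_disc} the difference matrix in \eqref{eq:amp_diff} is checked to be bounded for every bounded lattice $\mbb{L}$. With hypothesis (iii) interpreted uniformly in $\xi$, your proof is complete and furnishes exactly the argument the paper leaves to the references.
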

Using the above theorem, we have the following stability result. 
\begin{theorem}
  \label{thm:L2_stability_1stOrd_semi_disc}
  The semi-discrete scheme
  \eqref{eq:wave_mas_TD}-\eqref{eq:wave_mom_TD} is $L^2$-stable.  
\end{theorem}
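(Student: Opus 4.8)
The plan is to cast the linear scheme \eqref{eq:wave_mas_TD}--\eqref{eq:wave_mom_TD} into the canonical two-level form $B_1U^{n+1}=B_2U^n$ of Theorem~\ref{thm:Richtmyer}, Fourier transform it in space, write down the amplification matrix $G(\dt,\xi)$ in closed form, and then verify the three hypotheses (i)--(iii) of Richtmyer's criterion. Here the state vector is $U=(\rho,u)^{\top}$, the density being a scalar and the velocity being vector-valued.

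First I would multiply \eqref{eq:wave_mas_TD}--\eqref{eq:wave_mom_TD} by $\dt$ and separate the unknowns at levels $n+1$ and $n$. This exhibits
\[
B_1=\begin{pmatrix} 1 & \dt\,\br\,\nabla\cdot \\[2pt] \dt\dfrac{\ba^2}{\br\veps^2}\nabla & \Id \end{pmatrix},\qquad B_2=\bigl(1-\dt\,\bu\cdot\nabla\bigr)\Id,
\]
the advection being carried explicitly and the acoustic coupling implicitly. Passing to the Fourier side ($\nabla\mapsto i\xi$) gives $\hat B_1(\xi)=\Id+i\dt\,N(\xi)$, where $N(\xi)$ is the density--velocity coupling block, together with $\hat B_2(\xi)=(1-i\dt\,\bu\cdot\xi)\Id$. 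The Schur complement of the lower $3\times3$ block of $\hat B_1(\xi)$ is the scalar $1+\dt^2\ba^2\veps^{-2}\abs{\xi}^2\ge1$, so $\hat B_1(\xi)$ is invertible — which also re-proves the unique solvability of the implicit step — and one obtains the closed form $G(\dt,\xi)=\hat B_1(\xi)^{-1}\hat B_2(\xi)=(1-i\dt\,\bu\cdot\xi)\,\hat B_1(\xi)^{-1}$, with $\hat B_1(\xi)^{-1}$ given explicitly through that Schur complement.

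The verification of the Richtmyer hypotheses is then short. Letting $\dt\to0$ we have $\hat B_1(\xi)\to\Id$ and $1-i\dt\,\bu\cdot\xi\to1$, hence $G(0,\xi)=\Id$; this immediately yields (i) (the entries are $\delta_{ij}$, bounded uniformly in $\xi$) and (ii) ($\norm{G(0,\xi)}=1\le1$). For (iii), $G(\dt,\xi)$ is, for each fixed $\xi$, a rational function of $\dt$ whose denominator $1+\dt^2\ba^2\veps^{-2}\abs{\xi}^2$ stays $\ge1$, so $G(\cdot,\xi)$ is analytic at $\dt=0$ and $G(\dt,\xi)=G(0,\xi)+\dt\,\partial_\dt G(0,\xi)+\mcal{O}(\dt^2)=\Id+\mcal{O}(\dt)$; one can even observe that $\partial_\dt G(0,\xi)$ is minus the Fourier symbol of the spatial operator in \eqref{eq:wave_mass}--\eqref{eq:wave_mom}. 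Theorem~\ref{thm:Richtmyer} then delivers the $L^2$-stability.

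I expect the algebra (the block inversion of $\hat B_1$ and the expansion in $\dt$) to be routine, so the real ``obstacle'' is not an analytic difficulty but the choice of norm. In the plain $L^2$ pairing the off-diagonal blocks of $\hat B_1(\xi)^{-1}$ that couple $\rho$ and $u$ scale like $\veps^{-1}$, so the Lipschitz constant in (iii) degenerates as $\veps\to0$; this is harmless for Theorem~\ref{thm:Richtmyer}, which is applied at a fixed $\veps$, but it means the stability estimate is not yet uniform in the singular parameter. To obtain a $\veps$-uniform bound one should measure $(\rho,u)$ in the energy norm $\norm{(\rho,u)}_E^2=\int\bigl(\ba^2\br^{-1}\veps^{-2}\rho^2+\br\abs{u}^2\bigr)$, equivalently symmetrise \eqref{eq:wave_mass}--\eqref{eq:wave_mom} first; in those variables the acoustic part of $G$ is a genuine contraction and the explicitly-treated advection contributes only the $\mcal{O}(\dt)$ growth of the amplification factor that the Lax--Richtmyer notion of stability tolerates.
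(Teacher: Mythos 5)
Your proposal is correct and takes essentially the same route as the paper: Fourier transform the scheme, identify the amplification matrix $G(\dt,\xi)$, observe $G(0,\xi)=\Id$ so that conditions (i)--(ii) of Theorem~\ref{thm:Richtmyer} are immediate, and verify the Lipschitz condition (iii) via the $\mcal{O}(\dt)$ expansion with the denominator $1+\dt^2\br\ld\abs{\xi}^2\ge1$; the only cosmetic difference is that you obtain $G$ through the Schur-complement inversion of $\hat B_1$ instead of writing its entries explicitly as in \eqref{eq:amp_mat}. Your closing remark that the Lipschitz constant degenerates as $\veps\to0$, so that this argument gives stability only for fixed $\veps$ and a $\veps$-uniform statement would require the symmetrised energy norm, is a sound observation that goes beyond what the paper records.
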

\begin{proof}
  Taking the Fourier transform of
   \eqref{eq:wave_mas_TD}-\eqref{eq:wave_mom_TD} and re-arranging the
   terms gives 
   \begin{equation}
     \label{eq:FT_semi_disc}
     \hat{U}^{n+1} = G(\Dlt,\xi) \hat{U}^n,
   \end{equation}
   where
   \begin{equation}	\label{eq:amp_mat}
     G(\Dlt,\xi) = \gm \begin{pmatrix}
       1			& -i \Dlt \br \xi_1		& -i \Dlt \br \xi_2 \\
       -i \Dlt \ld \xi_1	& 1 + \Dlt^2 \br \ld \xi_2^2	&
       -\Dlt^2 \br \ld \xi_1 \xi_2 \\ 
       -i \Dlt \ld \xi_2	& -\Dlt^2 \br \ld \xi_1 \xi_2	& 1 +
       \Dlt^2 \br \ld \xi_1^2 
     \end{pmatrix},
     \ld = \frac{\ba^2}{\br \veps^2}  \ \text{and}  \ \gm = \frac{1 -
       i \Dlt (\bu \cdot \xi)}{1 + \Dlt^2 \br
       \frac{\ba^2}{\veps^2}|\xi|^2}. 
   \end{equation}
   Now, $G(0,\xi)$ reduces to the $3 \times 3$ identity matrix and
   hence conditions (i) and (ii) of Theorem~\ref{thm:Richtmyer} are
   automatically satisfied. Further, 
   \begin{align}
     \label{eq:amp_diff}
     G(\Dlt,\xi) - G(0,\xi) = \Dlt
     \begin{pmatrix}
       -\frac{\Dlt\br\ld |\xi|^2 + i(\bu\cdot\xi)}{1 + \Dlt^2\br\ld
         |\xi|^2}	& -i\br \xi_1 \gm	& -i\br \xi_2 \gm
       \\ 
     -i\ld \xi_1 \gm	&-\frac{\Dlt\br\ld \xi_1^2 + i(\bu\cdot\xi)(1
       + \Dlt^2\br\ld \xi_2^2)}{1 + \Dlt^2\br\ld |\xi|^2} &
     -\Dlt^2\br\ld\xi_1\xi_2\gm	\\ 
     -i\ld \xi_2 \gm	& -\Dlt^2\br\ld\xi_1\xi_2\gm
     &-\frac{\Dlt\br\ld \xi_2^2 + i(\bu\cdot\xi)(1 + \Dlt^2\br\ld
       \xi_1^2)}{1 + \Dlt^2\br\ld |\xi|^2} 
    \end{pmatrix}.
   \end{align}
   Note that the matrix on the right hand side in \eqref{eq:amp_diff}
   is bounded for every bounded lattice $\mbb{L}$. Hence, by
   Theorem~\ref{thm:Richtmyer}, the semi-discrete scheme
   \eqref{eq:wave_mas_TD}-\eqref{eq:wave_mom_TD} is $L^2$-stable.   
  \end{proof}

  \section{Numerical Experiments}
  \label{sec:num_exp}
  We do not intend to discuss the space discretisation in detail as we
  use employ standard techniques. We use a finite volume approach to
  approximate the semi-discrete scheme
  \eqref{eq:euler_mas_TD}-\eqref{eq:euler_mom_TD}. The explicit flux
  terms are approximated by a Rusanov-type flux whereas the implicit
  terms by simple central differences. The nonlinear elliptic equation
  \eqref{eq:elliptic_problem} is solved iteratively after
  discretisation of the derivatives by central differences. 

  In the following, we consider a test problem in two dimensions to
  demonstrate the AP property of the scheme. We take the well-prepared
  initial data given in \cite{DegondTang} which reads
  \begin{align}
    \rho(0,x_1,x_2)&=1+\veps^2\sin^2(2\pi(x_1+x_2)), \\
    q_1(0,x_1,x_2)&=\sin(2\pi(x_1-x_2))+\veps^2\sin(2\pi(x_1+x_2)), \\
    q_2(0,x_1,x_2)&=\sin(2\pi(x_1-x_2))+\veps^2\cos(2\pi(x_1+x_2)).   
  \end{align}
  The computational domain $[0,1]\times[0,1]$ is divided into
  $50\times50$ mesh points and we apply periodic boundary conditions
  on all four sides. The CFL number is set to 0.45 and we perform the
  computations up to a final time $T=1.0$. The parameter $\veps$ is
  set to 0.1. Note that our CFL condition is independent of $\veps$.  
  
  In Figures~\ref{fig:LM} and \ref{fig:Bs} we plot the density,
  $x_1$-velocity and the divergence of the velocity at times $t=0$ and
  $t=1$, for the low Mach and Boussinesq cases, respectively. It can be
  noted from the figures that in both the cases the density converges
  to the constant value 1 and the divergence approach 0. This is in
  conformity with the AP nature of the scheme in both the cases.  

  \begin{figure}[htbp]
    \centering
    \includegraphics[height=0.2\textheight]{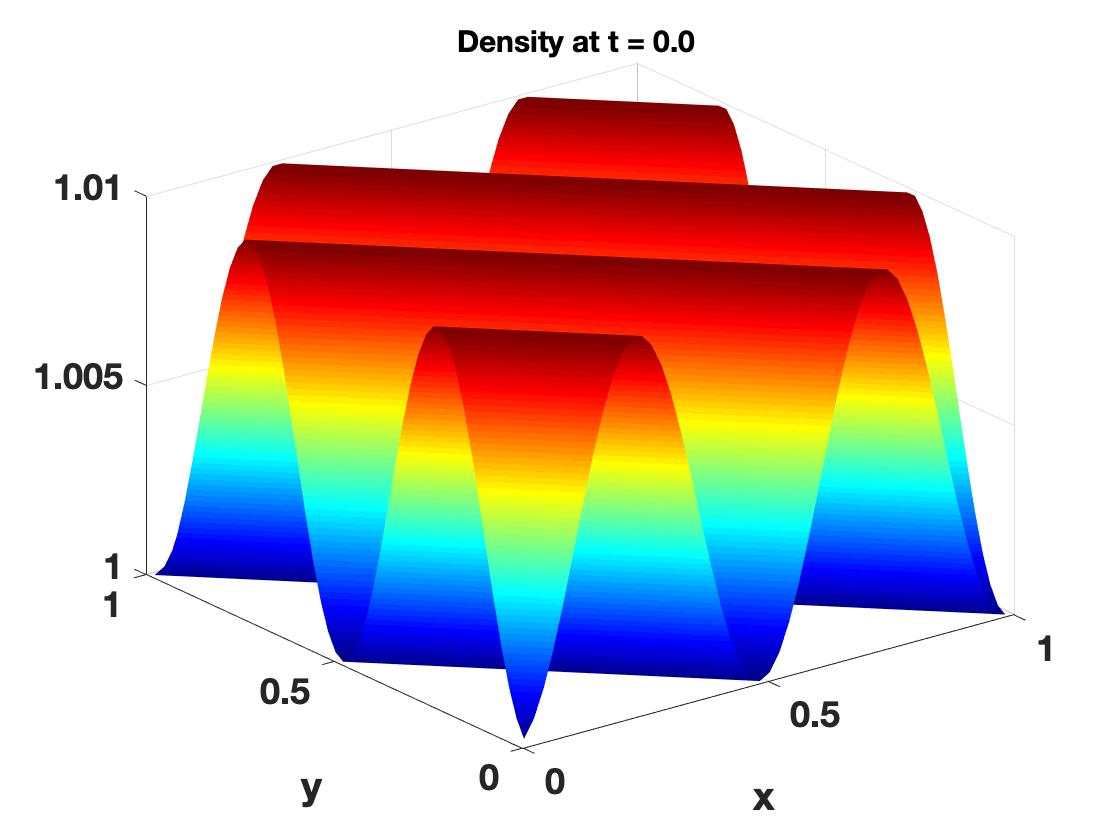} 
    \includegraphics[height=0.2\textheight]{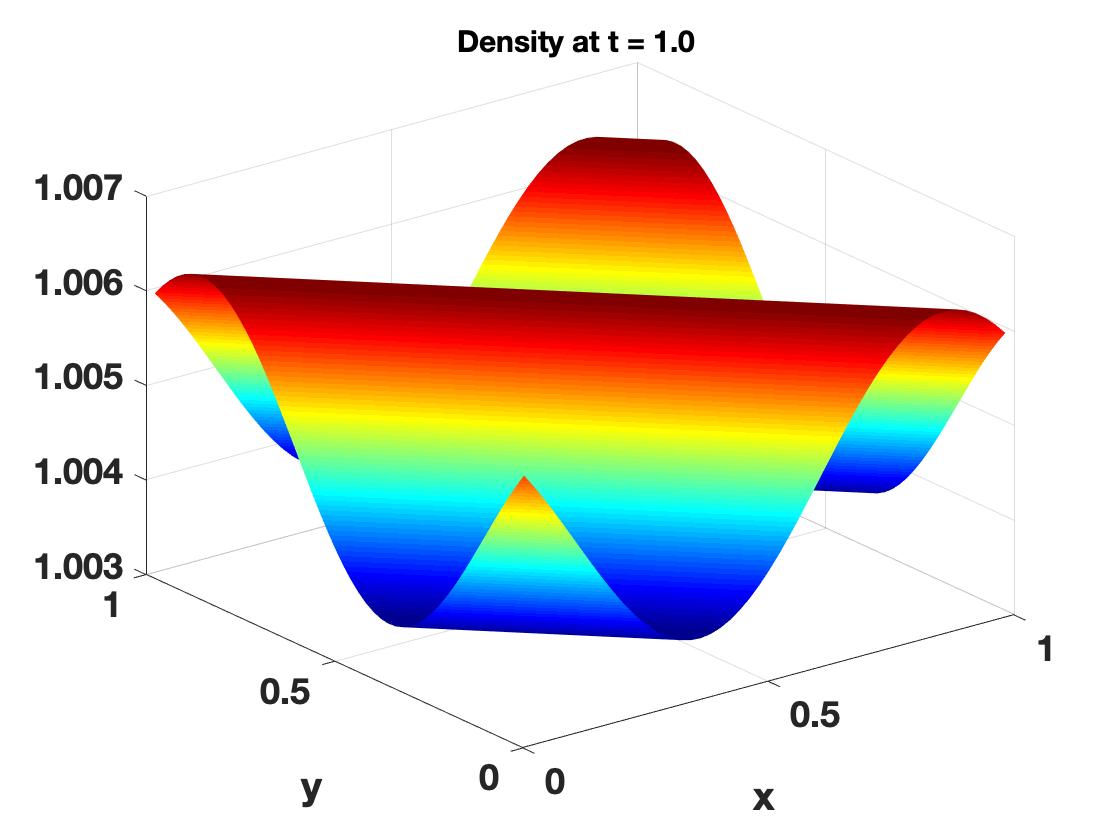}
    \includegraphics[height=0.2\textheight]{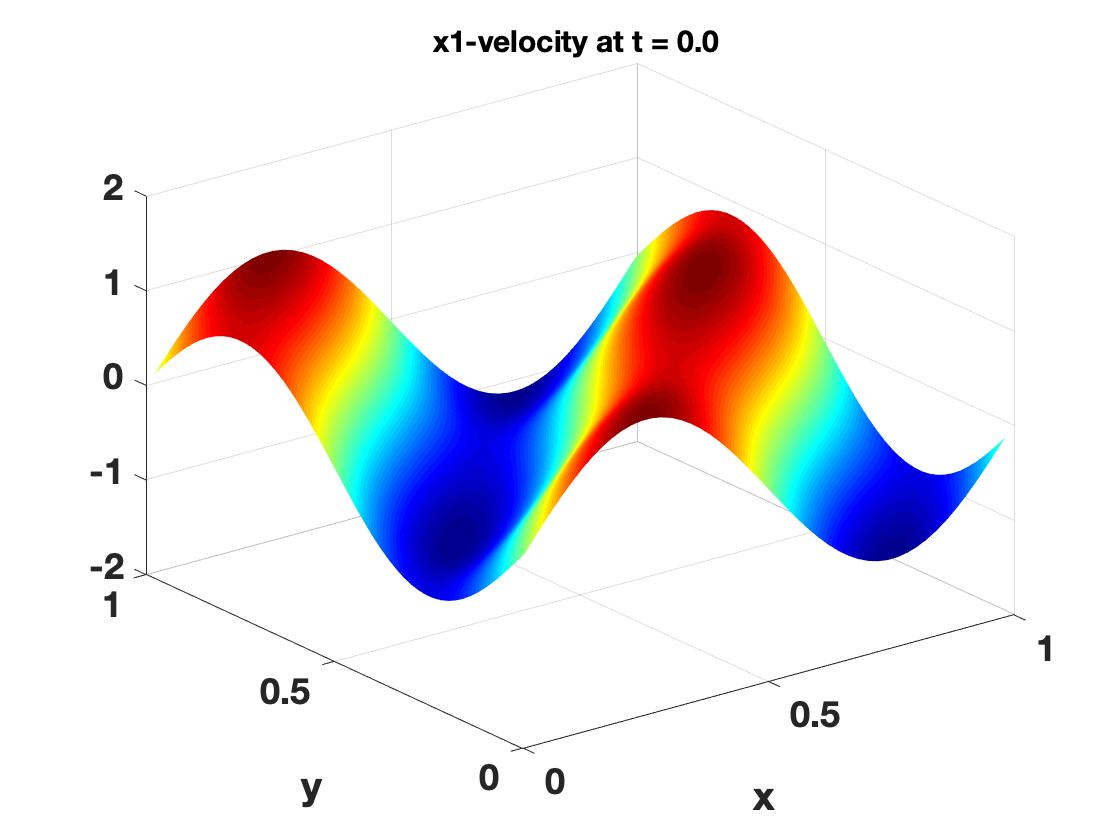}
    \includegraphics[height=0.2\textheight]{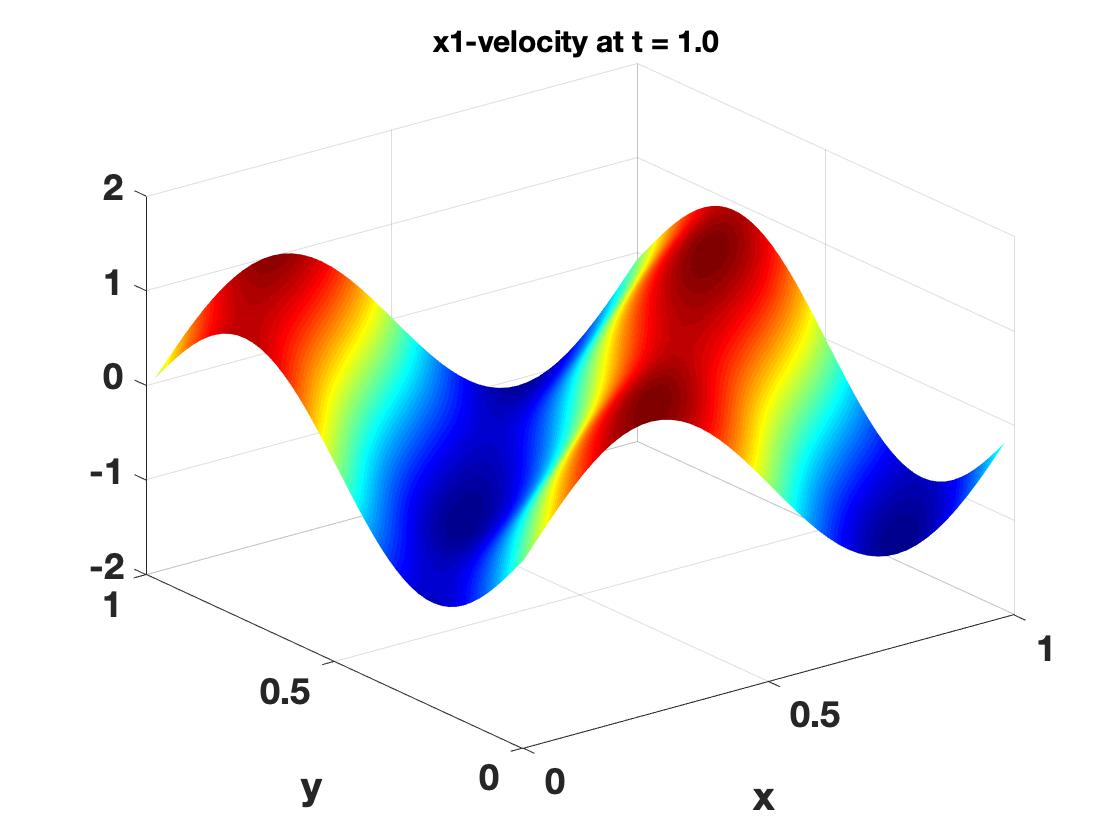}
    \includegraphics[height=0.2\textheight]{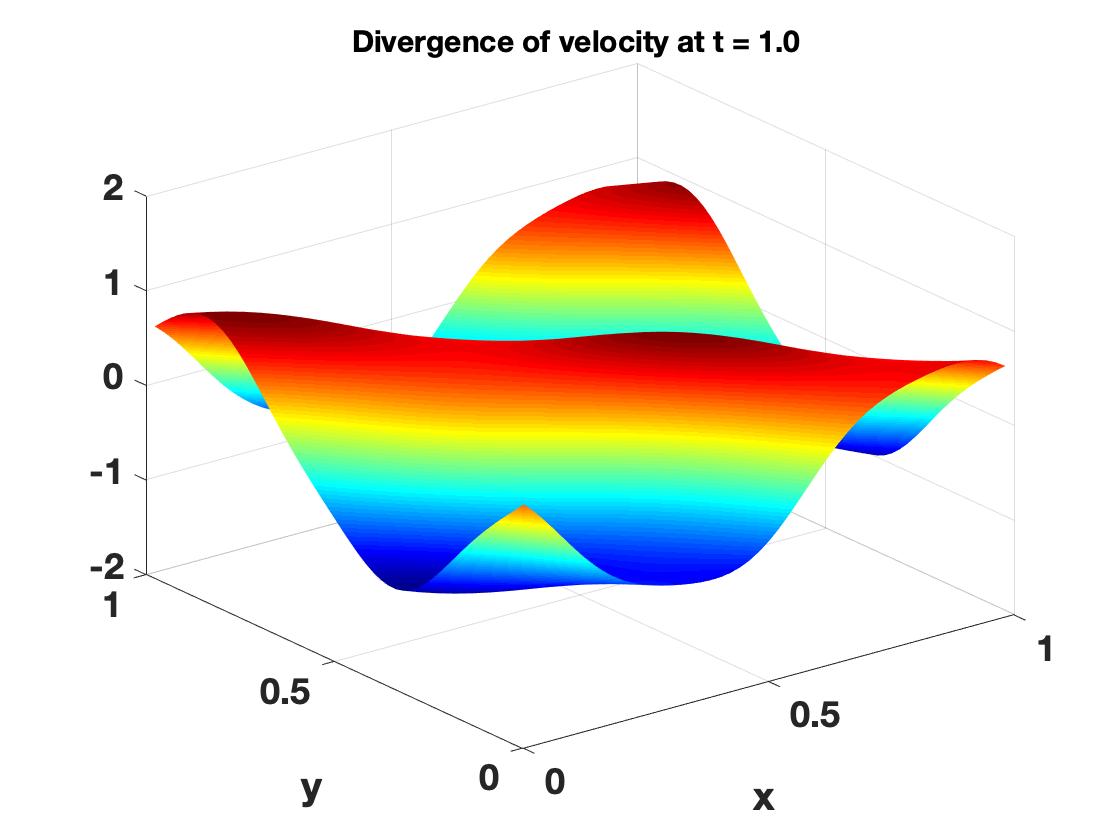} 
    \includegraphics[height=0.2\textheight]{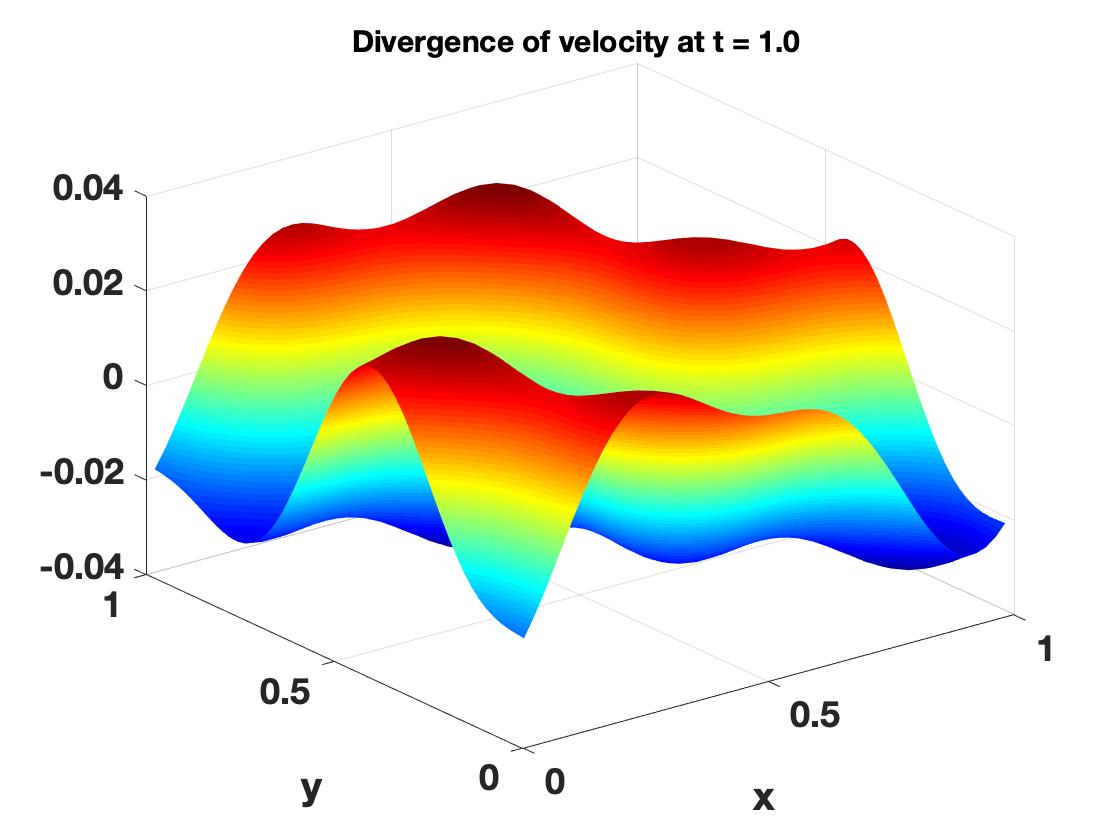} 
    \caption{For $\veps=0.1$, the density, $x_1$-velocity and velocity 
      divergence at $t=0$ (left) the density, $x_1$-velocity and
      velocity divergence at $t=1$. The low Mach number limit. }
    \label{fig:LM}
  \end{figure}

  \begin{figure}[htbp]
    \centering
    \includegraphics[height=0.2\textheight]{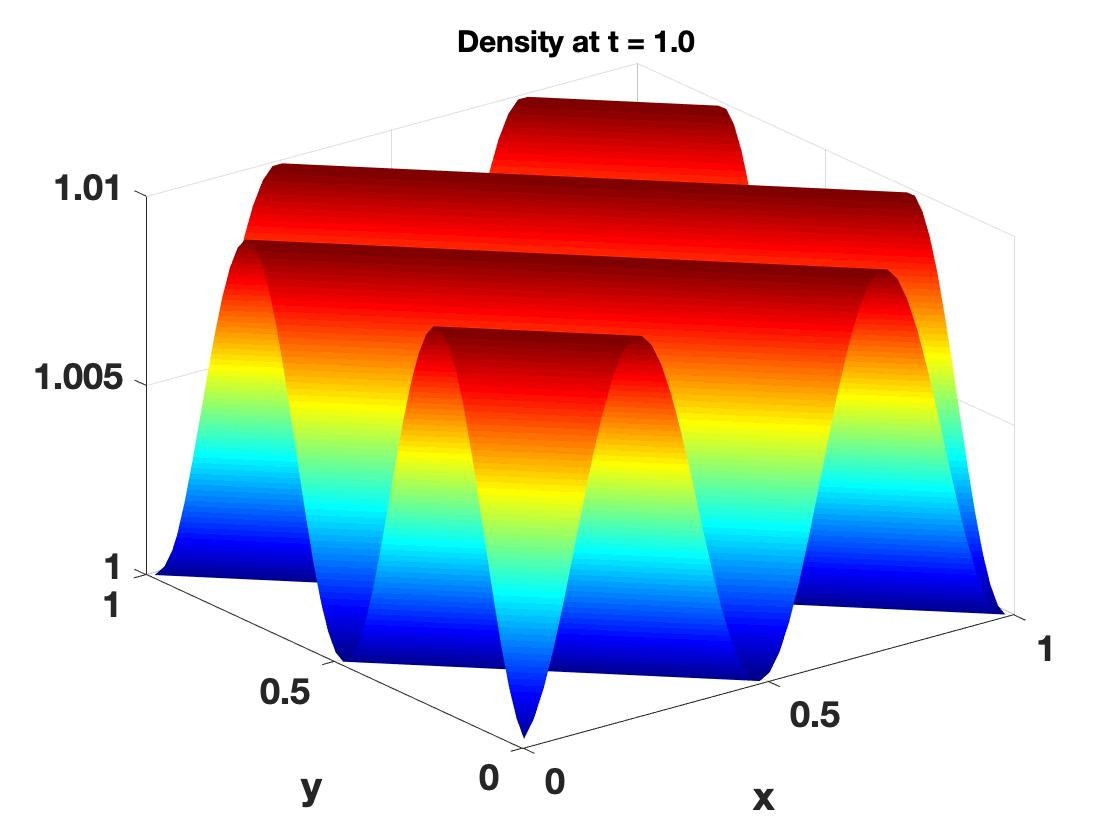} 
    \includegraphics[height=0.2\textheight]{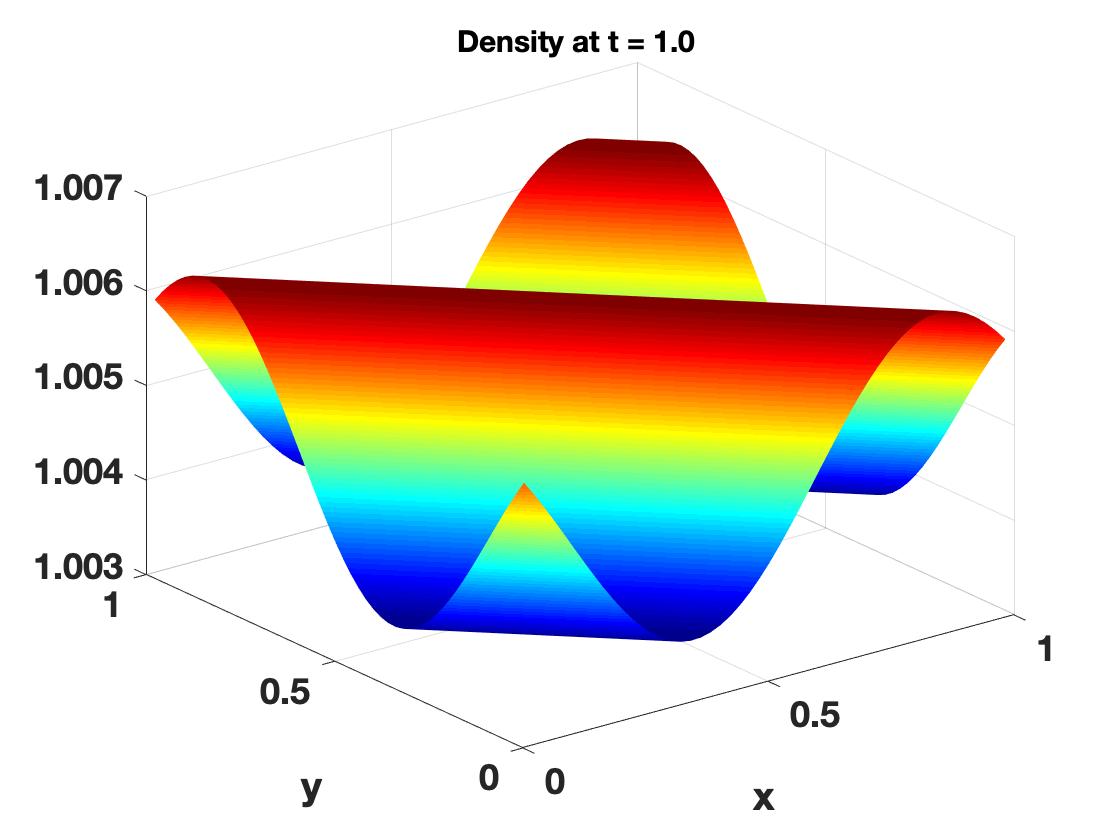}
    \includegraphics[height=0.2\textheight]{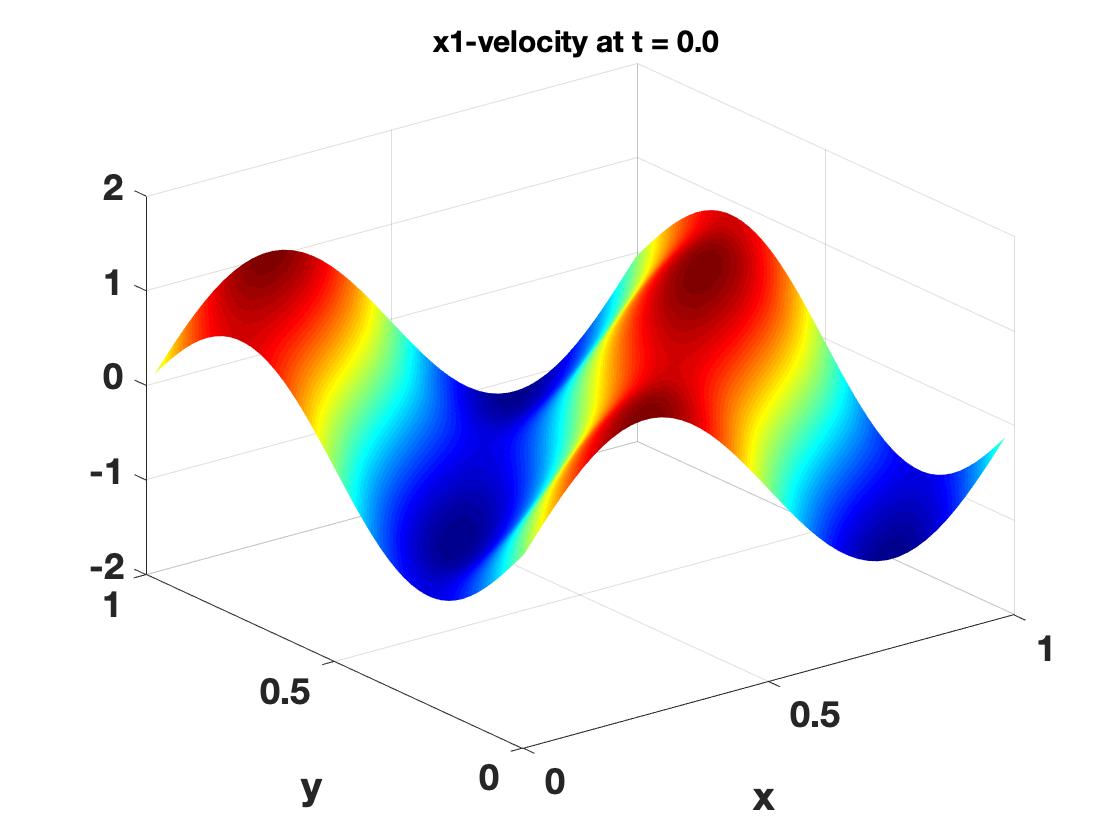}
    \includegraphics[height=0.2\textheight]{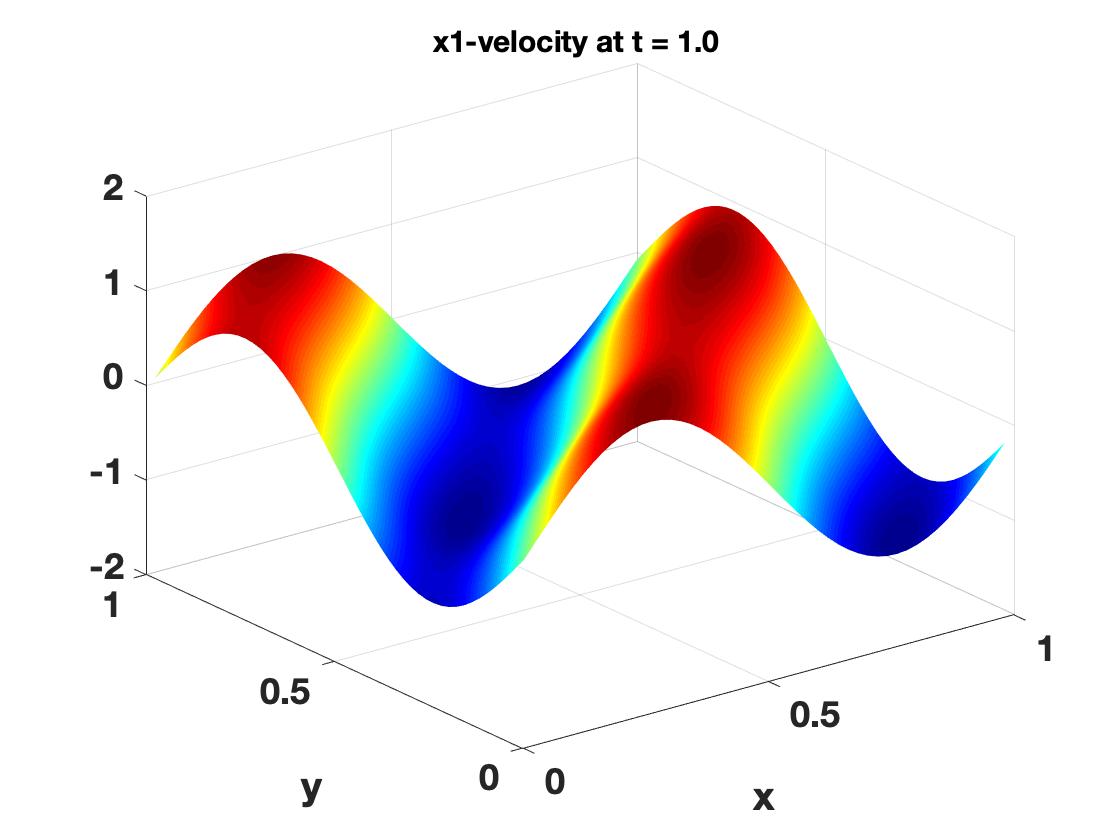}
    \includegraphics[height=0.2\textheight]{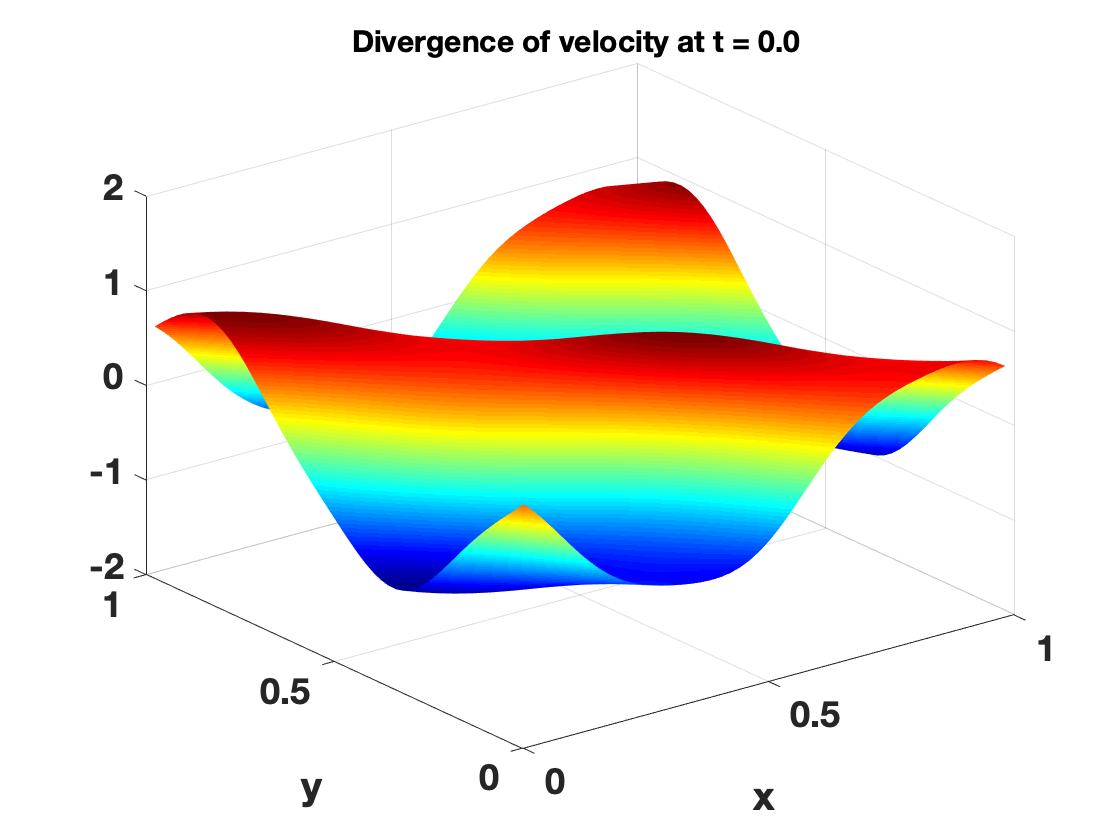} 
    \includegraphics[height=0.2\textheight]{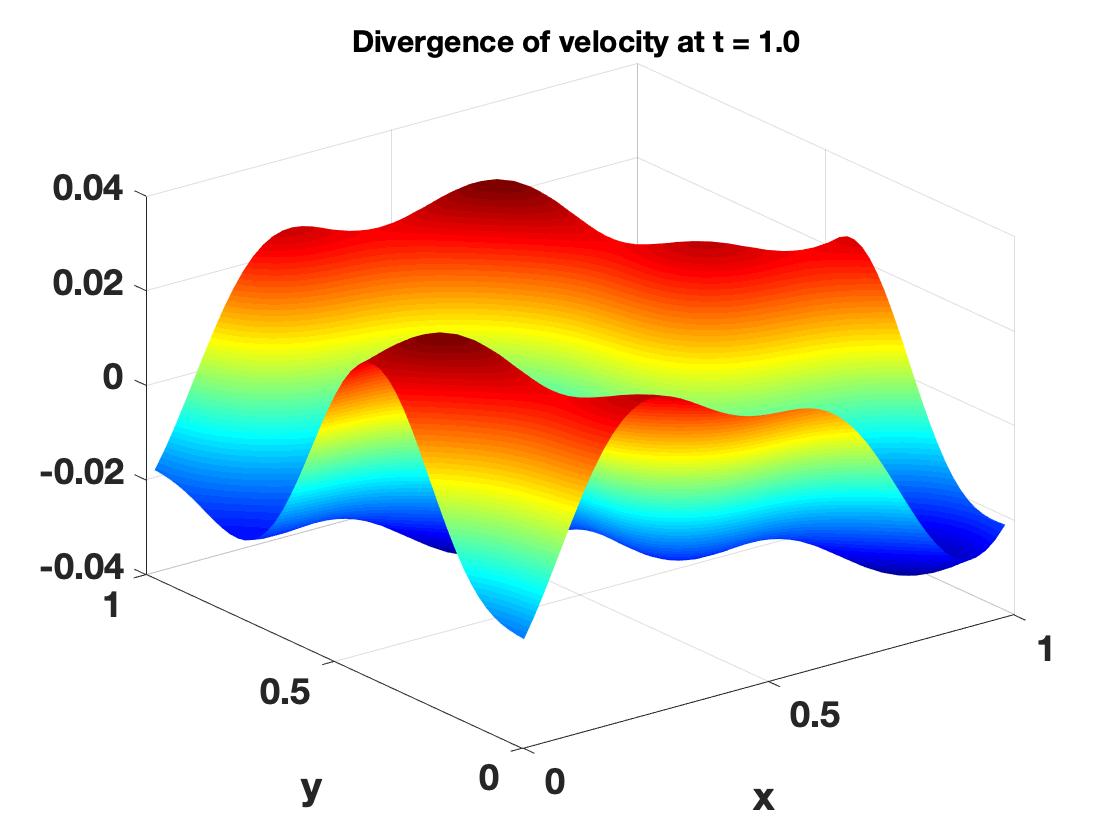} 
    \caption{For $\veps=0.1$, the density, $x_1$-velocity and velocity 
      divergence at $t=0$ (left) the density, $x_1$-velocity and
      velocity divergence at $t=1$. The Boussinesq limit. }
    \label{fig:Bs}
  \end{figure}

  \section{Conclusion}
  An AP semi-implicit time discretisation is proposed for the numerical
  approximation of the isentropic Euler equations with gravity in the
  low Mach number and Boussinesq limits. The schemes are theoretically
  shown to be asymptotically consistent as well as linearly
  stable. The results of numerical experiments provide a justification
  to AP nature of the scheme.

  \section*{Acknowledgement}
  The authors thank Arnab Das Gupta for several useful discussions on
  the topic.

\medskip
Received xxxx 20xx; revised xxxx 20xx.
\medskip

\end{document}